\newcommand{\PSU}{\operatorname{PSU}}
\newcommand{\alt}{\operatorname{Alt}}
\newcommand{\PSL}{\operatorname{PSL}}
\newcommand{\SL}{\operatorname{SL}}
\newcommand{\Sp}{\operatorname{Sp}}
\newcommand{\N}{\mathrm{N}}
\newcommand{\imod}[1]{\allowbreak\mkern4mu({\operator@font mod}\,\,#1)}
\theoremstyle{plain}
\newtheorem{thm}{Theorem}[section] 
\newtheorem{lem}[thm]{Lemma}
\newtheorem{prop}[thm]{Proposition} 
\newtheorem{cor}[thm]{Corollary} 
\newtheorem*{theorem*}{Theorem} 
\newtheorem*{conj*}{Conjecture}
\theoremstyle{definition}
\begin{document}

\title[Generation of finite groups]{Finite groups can be generated by a $\pi$-subgroup \\ and a $\pi'$-subgroup}

\author{Thomas Breuer}
\address{T. Breuer, Lehrstuhl f\"ur Algebra und Zahlentheorie, RWTH Aachen, Germany}
\email{sam@math.rwth-aachen.de}
  
\author{Robert M. Guralnick}
\address{R.M. Guralnick, Department of Mathematics, University of Southern California, Los Angeles, CA 90089-2532, USA}
\email{guralnic@usc.edu}

\date{\today} 

\thanks{
The first author was funded by the Deutsche Forschungsgemeinschaft
(DFG, German Research Foundation) -- Project-ID 286237555 -- TRR 195.
The second author was partially supported by the NSF
grant DMS-1901595 and a Simons Foundation Fellowship 609771.
We thank Dan Haran for the question.  We thank both Dan Haran 
and Alex Lubotzky for pointing out the connections with free profinite groups.
We also thank the referee for their careful reading and many very helpful comments on earlier
versions.} 

\begin{abstract}   
Answering a question of Dan Haran and generalizing some results of Aschbacher-Guralnick and Suzuki,
we prove that given a set of  primes $\pi$, any finite group can be generated by a $\pi$-subgroup and a 
$\pi'$-subgroup.   This gives a new free product description of a free profinite group. 
\end{abstract}

\dedicatory{For Moshe Jarden on the occasion of his 80th birthday} 

\maketitle


\section{Introduction} \label{s:intro}

Recall that a subgroup $H$ of $G$ is called intravariant if for any automorphism $a$ of $G$,
$a(H)$ and $H$ are conjugate in $G$.  There are many examples of such subgroups (including
Sylow subgroups).  See \cite{GH} for many results in that direction for simple groups.

If $\pi$ is a set of primes, we let $\pi'$ be the complementary set of primes. 

Our first main result is the following (answering a generalization of a question of Haran). 

\begin{thm} \label{t:main1}  Let $\pi$ be a set of primes and $G$ a finite group.   Then there
exist intravariant subgroups $P$ and $R$ of $G$ such that $G = \langle P, R \rangle$ with
$P$ a $\pi$-subgroup and $R$ a $\pi'$-subgroup.
\end{thm}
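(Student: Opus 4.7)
The plan is to argue by induction on $|G|$, first disposing of the solvable case by Hall's theorem and then reducing via a minimal normal subgroup to the case where $G$ is almost simple. The heart of the argument, which certainly requires the classification of finite simple groups, is to establish the statement for each finite non-abelian simple group.

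First, if $G$ is solvable, Hall's theorem furnishes a Hall $\pi$-subgroup $P$ and a Hall $\pi'$-subgroup $R$, each unique up to $G$-conjugacy. Since $\mathrm{Aut}(G)$ permutes the Hall $\pi$-subgroups, each of these is intravariant. From $P \cap R = 1$ (coprime orders) and $|P||R|=|G|$ one gets $PR = G$ as a set, hence $G = \langle P, R \rangle$. For general $G$, take a minimal normal subgroup $N$. If $N$ is an elementary abelian $r$-group, apply induction to $G/N$ to obtain intravariant $\bar P, \bar R$, and lift, absorbing $N$ into $P$ if $r \in \pi$ and into $R$ if $r \in \pi'$; intravariance has to be tracked through the lift, using that automorphisms of $G$ descend to automorphisms of $G/N$. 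If $N$ is a direct product of isomorphic non-abelian simples $S$ permuted transitively by $G/N$, one feeds the intravariant subgroups produced in the simple case through a diagonal/orbit-wise construction that is stable under the permutation action of $G/N$ and outer automorphisms, and combines them with the inductive choice for $G/N$.

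The genuine obstacle is the simple case: for every finite non-abelian simple $S$ and every $\pi$, produce intravariant $\pi$- and $\pi'$-subgroups of $S$ that generate $S$. Here Hall subgroups generally do not exist, so one must tailor the construction. A workable template is to take $P$ to contain a Sylow $p$-subgroup of $S$ for some well-chosen $p \in \pi$ (Sylow subgroups are automatically intravariant), and choose $R$ from a canonically defined family of $\pi'$-subgroups (a normalizer of a torus or a suitable Sylow, the fixed points of a Steinberg-type endomorphism, or a characteristic subgroup of a parabolic), so that the conjugacy class of $R$ is $\mathrm{Aut}(S)$-invariant. Generation is then verified by showing that no maximal subgroup of $S$ contains both, leveraging the Aschbacher--O'Nan--Scott-type classification of maximal subgroups of classical and exceptional groups of Lie type, with the alternating and sporadic cases handled directly. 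The prior results of Aschbacher--Guralnick (the case $|\pi|=1$) and Suzuki on generation by Sylow-like pairs serve as the template and provide base cases.
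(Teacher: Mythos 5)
Your skeleton --- induction on $|G|$, the solvable case via Hall's theorem, reduction through a minimal normal subgroup, and a CFSG-based generation theorem for the simple groups proved by showing no maximal overgroup of a suitable Sylow subgroup contains the second generator --- is essentially the paper's strategy. The genuine gap is in how you close the induction at a non-abelian minimal normal subgroup $N=S_1\times\cdots\times S_k$. Writing $G/N=\langle \bar X,\bar Y\rangle$ with $\bar X$ a $\pi$-group and $\bar Y$ a $\pi'$-group, the full preimage of $\bar X$ is an extension of $N$ by a $\pi$-group and is \emph{not} a $\pi$-group; what you must produce is a $\pi$-subgroup $X_1$ with $X_1N/N=\bar X$ that in addition contains a prescribed $\pi$-subgroup of $N$ (say a Sylow $p$-subgroup of $N$, $p\in\pi$), and likewise on the $\pi'$ side, so that $\langle X_1,Y_1\rangle\supseteq N$ and hence equals $G$. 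Saying you will ``combine'' the simple-group data with the inductive choice for $G/N$ skips exactly this step; it is the paper's lifting lemma (Lemma~\ref{l:lift1}), proved via the Frattini argument together with the fact that a $\pi$-quotient of any finite group is covered by a $\pi$-subgroup (Schur--Zassenhaus plus the $\Phi$-quotient argument). Also, a literal ``diagonal'' subgroup of $N$ is the wrong object (two diagonal subgroups generate only a diagonal copy of $S$); one takes products over the factors, i.e.\ Sylow subgroups of $N$, which are automatically intravariant, so no equivariant construction is needed.

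Two further points. For intravariance you cannot work modulo a minimal \emph{normal} subgroup: automorphisms of $G$ need not stabilize $N$, so they do not descend to $G/N$; use a minimal characteristic subgroup (as the paper does), and on the $\pi'$ side of the abelian step you need the conjugacy part of Schur--Zassenhaus (hence Feit--Thompson for general $\pi$) to keep the complement intravariant. In the simple case, most of your candidate $\pi'$-generators (torus normalizers, Steinberg fixed-point subgroups, characteristic subgroups of parabolics) are not $\pi'$-groups; the workable choice is a Sylow $t$-subgroup with $t\in\pi'$, and the statement actually needed is Theorem~\ref{t:simple}: one prime $r$ (depending on $L$) such that $L$ is generated by a Sylow $r$-subgroup together with a Sylow $t$-subgroup for \emph{every} prime $t\mid |L|$ --- the ``any two primes'' version is only conjectural --- and this one-sided flexibility suffices because a minimal characteristic subgroup of a minimal counterexample has order divisible by primes from both $\pi$ and $\pi'$.
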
  

Of course, by the Feit-Thompson theorem \cite{FT}, any odd order subgroup is solvable.  
In the case $\pi=\{2\}$, this gives:

\begin{cor} \label{c:ag}  Let $G$ be a finite group.  Then $G$ can be generated by a Sylow $2$-subgroup
and an intravariant (solvable) group of odd order.
\end{cor}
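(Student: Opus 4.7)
The plan is to derive Corollary~\ref{c:ag} as an essentially immediate consequence of Theorem~\ref{t:main1} together with two classical facts.

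First, I would apply Theorem~\ref{t:main1} with the choice $\pi = \{2\}$, so that $\pi'$ is the set of all odd primes. This produces intravariant subgroups $P$ and $R$ of $G$ with $P$ a $2$-group, $R$ of odd order, and $G = \langle P, R \rangle$. The Feit--Thompson odd order theorem (cited just above the statement) then ensures that $R$ is solvable, supplying the parenthetical ``solvable'' assertion.

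The remaining step is to enlarge $P$ to a Sylow $2$-subgroup while preserving both intravariance on that side and the generation property. Choose any Sylow $2$-subgroup $S$ of $G$ containing $P$; such $S$ exists by Sylow's theorem. Because every automorphism of $G$ sends a Sylow $2$-subgroup to a Sylow $2$-subgroup and the Sylow $2$-subgroups form a single $G$-conjugacy class, $S$ is automatically intravariant (this is precisely the example highlighted in the opening paragraph of the introduction). Generation is then clear: $G = \langle P, R \rangle \leq \langle S, R \rangle \leq G$, so $G = \langle S, R \rangle$.

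The only real obstacle in this corollary is Theorem~\ref{t:main1} itself; once it is in hand, the passage to the corollary is formal, the two ingredients being Sylow's theorem (to embed $P$ into a Sylow $2$-subgroup of $G$) and the Feit--Thompson theorem (to obtain solvability of $R$).
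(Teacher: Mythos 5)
Your proposal is correct and matches the paper's (essentially immediate) derivation: the authors obtain Corollary~\ref{c:ag} directly from Theorem~\ref{t:main1} with $\pi=\{2\}$ plus the Feit--Thompson theorem, and your extra step of enlarging the intravariant $2$-subgroup $P$ to a Sylow $2$-subgroup $S$ (which is intravariant and satisfies $G=\langle P,R\rangle\leq\langle S,R\rangle$) is exactly the harmless observation needed to get the word ``Sylow'' in the conclusion.
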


In \cite[Thm. A]{AG}, it is shown that every finite group can be generated by two conjugate solvable groups and so
by a cyclic subgroup and a solvable group.    This was generalized in \cite{Su} by  noting  that we can take the solvable group to be
intravariant.   We note that the proof of Theorem \ref{t:main1}  here would give another proof of the Aschbacher-Guralnick and Suzuki results. 

A complementary result to the previous corollary is obtained in \cite[Cor. 2]{Gu}.   This asserts that any
finite group be generated by a Sylow $2$-subgroup and a solvable subgroup containing a
Sylow $2$-subgroup.    The conclusion
that the subgroup is intravariant was not stated but the proof can be modified to yield that conclusion.  

 It was also noted in \cite{AG} that not every finite group can be generated by two nilpotent groups
(and this was generalized in \cite{CH} to show that not every finite  group can be generated by $m$ nilpotent
subgroups for any fixed $m$).   

Dan Haran and Alex Lubotzky pointed  out that using the proof of Theorem \ref{t:main1}
and  Iwasawa's criterion for freeness, one can show:

\begin{thm} \label{t:main2}   Let $\pi$ be a set of primes.   Let $A$ be the free pro-$\pi$ group on countably
many generators and $B$ be the free pro-$\pi'$ group on countably
many generators.  Then $A*B$ is a free profinite group.
\end{thm}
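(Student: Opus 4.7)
The plan is to invoke Iwasawa's criterion: a profinite group of countable topological rank is free of rank $\aleph_0$ if and only if every finite embedding problem admits a surjective solution. I would verify both of these conditions for $A * B$.

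The rank count is immediate: $A$ has rank $\aleph_0$ and is a continuous quotient of $A * B$, so $\mathrm{rank}(A * B) \geq \aleph_0$, while the reverse inequality follows from taking the union of countable topological generating sets of $A$ and $B$.

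For an arbitrary finite embedding problem given by surjections $\alpha\colon A * B \twoheadrightarrow G$ and $\beta\colon H \twoheadrightarrow G$ onto finite groups, set $P_G := \alpha(A)$, which is a $\pi$-subgroup of $G$ since $A$ is pro-$\pi$, and $R_G := \alpha(B)$, which is a $\pi'$-subgroup; by surjectivity of $\alpha$, $G = \langle P_G, R_G \rangle$. I would reduce the embedding problem to the following finite-group statement: \emph{there exist a $\pi$-subgroup $P$ and a $\pi'$-subgroup $R$ of $H$ with $\beta(P) = P_G$, $\beta(R) = R_G$, and $\langle P, R \rangle = H$}. Granting this, the freeness of $A$ as the free pro-$\pi$ group of countable rank makes the sub-embedding problem $\alpha|_A \colon A \to P_G$ over $\beta|_P \colon P \to P_G$ properly solvable and produces a surjection $\gamma_A\colon A \twoheadrightarrow P$ with $\beta \circ \gamma_A = \alpha|_A$; the analogous argument in the pro-$\pi'$ category yields $\gamma_B\colon B \twoheadrightarrow R$ with $\beta \circ \gamma_B = \alpha|_B$. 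The universal property of the free profinite product assembles these into a continuous $\gamma\colon A * B \to H$ with $\beta \gamma = \alpha$ and image $\langle P, R \rangle = H$, solving the embedding problem.

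The main obstacle is therefore the finite-group statement, which is a relative version of Theorem~\ref{t:main1} (the case $G = 1$) in which a prescribed generation of the quotient must be lifted to a generation of $H$. I would attack this by induction on $|H|$, using a minimal normal subgroup $N$ of $H$ contained in $\ker\beta$ and appealing to the inductive hypothesis in $H/N$ to obtain a generating pair $(\bar P, \bar R)$ of $H/N$ with the prescribed images in $G$. When $N$ is a $\pi$-group the lift is easy: take $P$ to be the full preimage of $\bar P$ (automatically a $\pi$-subgroup of $H$), use Schur-Zassenhaus to produce a $\pi'$-complement $R$ above $\bar R$, and note that $P \supseteq N$ forces $\langle P, R \rangle = H$; the case of $N$ a $\pi'$-group is symmetric. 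The genuinely hard case is when $N$ is a direct product of isomorphic nonabelian simple groups whose orders meet both $\pi$ and $\pi'$, and here the arguments from the proof of Theorem~\ref{t:main1} -- the generation of (almost) simple groups by a $\pi$-subgroup and a $\pi'$-subgroup via the classification of finite simple groups -- have to be transported to the relative setting rather than merely quoted as a statement.
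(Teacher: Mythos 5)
Your proposal is correct and follows essentially the same route as the paper: Iwasawa's criterion, reduced to the relative finite-group lifting statement (the paper's Lemma \ref{l:lift3}, proved by exactly the minimal-normal-subgroup induction you sketch, with Schur--Zassenhaus in the $\pi$/$\pi'$ cases and Theorem \ref{t:simple} plus Lemma \ref{l:lift1} in the nonabelian case), then using freeness of the pro-$\pi$ and pro-$\pi'$ factors and the universal property of the free product to assemble the solution. The only cosmetic differences are that you phrase the embedding problem with a general surjection $\beta\colon H \twoheadrightarrow G$ rather than a quotient map and make the rank count explicit.
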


Since a free profinite group on countably many generators surjects onto any countably generated profinite group, an
immediate consequence of Theorem \ref{t:main2}  is the following.  

\begin{cor} \label{c:pro}  Let $G$ be a countably generated profinite group.
\begin{enumerate} 
\item $G$ can be generated by a  pro-$\pi$-subgroup and a pro-$\pi'$-subgroup.
\item $G$ can be generated by a pro-$2$-subgroup and a pro-solvable subgroup. 
\end{enumerate} 
\end{cor} 

Similarly, using Lemmas \ref{l:lift1}, \ref{l:lift2} and \ref{l:lift4}, one sees that:

\begin{thm} \label{t:main3}  Let $S$ be the free prosolvable group on countably many generators, then $S*S$ is a free profinite group.   If $\pi$ is a set of primes and either $2 \in {\pi}$ or $\{3,5\} \subseteq \pi$ and $A$ is the free pro-$\pi$-group on countably
many generators, then $A*S$ is a free profinite group.
\end{thm} 

The condition on $\pi$ in the previous result is required to ensure that every finite nonabelian simple group
has order divisible by some prime in $\pi$.

As far as we know,  these are the first examples known of a free product description of  a free profinite group 
with neither factor free.   See \cite{HL} for an example where  one factor of the free product is projective but not
free and the other is free.  

One can translate the result to field extensions.  We only state it in the case of finite extensions. 

\begin{cor}  \label{c:field}  Let $\pi$ be a set primes.   Let $K/F$ be a finite Galois extension.   Then there exist
subfields $K_1$ and $K_2$ with $F=K_1 \cap K_2$ such that $[K:K_1]$ is a $\pi$-number and
$[K:K_2]$ is a $\pi'$-number. 
\end{cor}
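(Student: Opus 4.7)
The plan is to translate the statement directly through the Galois correspondence and apply Theorem~\ref{t:main1}.

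First I would set $G = \operatorname{Gal}(K/F)$, which is a finite group. Applying Theorem~\ref{t:main1} to $G$ and the given set of primes $\pi$, I obtain subgroups $P, R \leq G$ with $P$ a $\pi$-subgroup, $R$ a $\pi'$-subgroup, and $G = \langle P, R \rangle$. (The intravariance of $P$ and $R$ plays no role here; only the generation statement is needed.)

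Next I would define $K_1 = K^P$ and $K_2 = K^R$, the fixed subfields of $P$ and $R$ acting on $K$. By the fundamental theorem of Galois theory, $[K:K_1] = |P|$, which is a $\pi$-number, and $[K:K_2] = |R|$, which is a $\pi'$-number. Moreover, an element of $K$ lies in $K_1 \cap K_2$ if and only if it is fixed by every element of both $P$ and $R$, hence by every element of $\langle P, R \rangle = G$; therefore $K_1 \cap K_2 = K^G = F$.

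There is no real obstacle here: the corollary is a straightforward reformulation of Theorem~\ref{t:main1} in the language of finite Galois extensions, with the subgroup-generation condition $G = \langle P, R \rangle$ corresponding precisely to the intersection condition $K_1 \cap K_2 = F$ under the Galois correspondence.
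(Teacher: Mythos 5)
Your proof is correct and is exactly the intended argument: the paper states the corollary as a direct translation of Theorem~\ref{t:main1} through the Galois correspondence, which is precisely what you carry out (take $G=\operatorname{Gal}(K/F)$, write $G=\langle P,R\rangle$, and set $K_1=K^P$, $K_2=K^R$, noting $K^P\cap K^R=K^{\langle P,R\rangle}=F$). Nothing is missing; you are also right that intravariance is not needed for this corollary.
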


There are two key steps in the proofs.  The first is  a stronger result for simple groups.   The classification of finite simple
groups is required for our proof of this. 

\begin{thm} \label{t:simple}  Let $L$ be a finite   simple group.  There exists a prime $r$ (depending
on $L$)  such that given any prime $t$ dividing $|L|$,  $L$ can be generated
by a Sylow $r$-subgroup and a Sylow $t$-subgroup. 
\end{thm}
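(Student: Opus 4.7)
The plan is to apply the classification of finite simple groups, handling alternating groups, groups of Lie type, and sporadic groups separately. In each case the strategy is to identify a prime $r$ dividing $|L|$ such that the maximal overgroups of a Sylow $r$-subgroup $R$ are tightly controlled, and then, for each prime $t$ dividing $|L|$, to exhibit a Sylow $t$-subgroup $T$ not contained in any proper overgroup of $R$; this forces $\langle R, T\rangle = L$. The case $t = r$ requires the separate observation that any nonabelian simple group is generated by two of its Sylow $r$-subgroups, which follows from simplicity (so that the normal closure of $R$ is all of $L$) together with the transitive conjugation action on Sylow subgroups.

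For alternating groups $A_n$, the plan is to take $r$ to be the largest prime with $r \leq n$, which exists by Bertrand's postulate and satisfies $r > n/2$ once $n$ is moderately large. A Sylow $r$-subgroup is then cyclic, generated by an $r$-cycle, and its overgroups in $A_n$ are severely restricted by Jordan's theorem and its refinements classifying primitive permutation groups containing a cycle of prime length greater than half the degree. For any prime $t$ dividing $|A_n|$, cycle-structure and counting arguments show that some Sylow $t$-subgroup lies outside these few overgroups. Small values of $n$ can be handled by direct inspection or computer algebra.

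For groups of Lie type $L$ in defining characteristic $p$, the plan is to take $r$ to be a primitive prime divisor (in the sense of Zsigmondy) of $q^e - 1$ for $e$ chosen so that a Sylow $r$-subgroup $R$ is cyclic and lies in a unique elliptic maximal torus $T$. The maximal overgroups of $R$ in $L$ are then classified by results in the spirit of Guralnick--Penttila--Praeger--Saxl for classical groups, and analogous results of Liebeck--Seitz and others for exceptional groups: essentially only $N_L(T)$, together with a short list of exceptions. For each prime $t$ one then verifies that a Sylow $t$-subgroup is not contained in any such overgroup; for $t = p$ this is immediate since $|N_L(T)|$ is coprime to $p$, while for other $t$ it follows from comparing the $t$-parts of $|L|$ and $|N_L(T)|$. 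The small-rank or small-$q$ exceptions where Zsigmondy primes fail, together with the $26$ sporadic groups, would be checked by direct computation using the Atlas of Finite Groups and a computer algebra system such as GAP.

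The main obstacle is the uniform treatment of the Lie type case: the overgroup analysis of a Sylow $r$-subgroup must be carried out consistently across all classical and exceptional families, and the Zsigmondy exceptions, the interplay between the various conjugacy classes of maximal tori, and the possibility that a Sylow $t$-subgroup for some small $t$ accidentally falls inside $N_L(T)$ all require careful attention. The tight constraint that a \emph{single} prime $r$ must work against \emph{every} prime $t$ dividing $|L|$ is what distinguishes this statement from more standard generation results for simple groups.
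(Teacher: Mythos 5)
Your plan diverges from the paper's (which takes $r=2$ for alternating and sporadic groups and $r=p$, the defining characteristic, for Lie type, so that by Tits's lemma the only maximal overgroups of a Sylow $r$-subgroup are parabolic, one per conjugacy class), and the most serious problem is your treatment of the case $t=r$. You assert that any nonabelian simple group is generated by two of its Sylow $r$-subgroups and that this ``follows from simplicity (so that the normal closure of $R$ is all of $L$) together with the transitive conjugation action on Sylow subgroups.'' That deduction is invalid: simplicity only gives $L=\langle R^g : g\in L\rangle$, i.e.\ generation by \emph{all} conjugates, and there is no general principle that a subgroup with full normal closure generates $L$ together with a single conjugate --- for instance the normal closure of $\langle (1\,2\,3)\rangle$ in $\alt_n$ is $\alt_n$, yet any two conjugates of this subgroup move at most six points and so generate a proper subgroup for $n\ge 7$; transitivity of the conjugation action adds nothing. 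Worse, in your own framework this is exactly the case where your order-comparison test breaks down: when $r$ is a Zsigmondy prime with cyclic Sylow $r$-subgroup lying in a maximal torus $T$, the normalizer $N_L(T)$ typically contains a \emph{full} Sylow $r$-subgroup of $L$, so comparing $r$-parts cannot exclude $\langle R, R^g\rangle$ from lying in a conjugate of $N_L(T)$. A genuine argument is needed here, e.g.\ a counting or fixed-point-ratio estimate over the finitely many maximal overgroups of $R$, or (as the paper does) proving the stronger statement that $\langle P, x^g\rangle=L$ for every noncentral element $x$ and a suitable conjugate, then taking $x$ an $r$-element.

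Beyond this, your Lie-type strategy is workable in principle but much heavier than you indicate, and the claim that the maximal overgroups of $R$ are ``essentially only $N_L(T)$ with a short list of exceptions'' is too optimistic: the Guralnick--Penttila--Praeger--Saxl lists (and their exceptional-group analogues) include extension-field and subfield subgroups, classical subgroups and nearly simple irreducible groups, and for small primes $t$ (notably $t=2$) the inequality between $t$-parts must be verified against each of these, in addition to handling all Zsigmondy failures and small-rank cases. Similarly, for alternating groups your choice of $r$ as the largest prime $\le n$ forces extra cases when $r>n-3$ (where Jordan's theorem does not apply and one needs the classification of primitive groups containing a cycle), and the transitivity/primitivity of $\langle R,T\rangle$ for suitably chosen conjugates is asserted rather than argued. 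By contrast, the paper's choices ($r=2$, respectively $r=p$) make the overgroup analysis essentially trivial (odd-index maximal subgroups counted via permutation characters, respectively parabolics containing a fixed Sylow $p$-subgroup) and dispatch all primes $t$, including $t=r$, in one stroke.
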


The other step is a lifting theorem for homomorphisms onto a $\pi$-group or solvable group.

We conjecture that in fact we can take any prime $r$ dividing $|L|$ above or equivalently
that if $r$ and $s$ are primes dividing the order of the finite simple group $L$, then
$L$ can be generated by a Sylow $r$-subgroup and a Sylow $s$-subgroup.  It is proved
in \cite[Cor. 18]{BGG} that for a fixed $r,s$ there are at most finitely many exceptions.  

In the next section, we prove Theorem \ref{t:simple}.   We then prove the lifting
results we need and in  the sections \ref{s:main1} and \ref{s:free}  we prove the main results. 

We conclude by showing that the conjecture is true for alternating and sporadic groups.

\section{Simple Groups}

\begin{prop} \label{p:alt}   Let $S = \alt_n, n \ge 5$.   If $ t \le n$ is prime, then $S$ can be 
generated by a Sylow $2$-subgroup and a Sylow $t$-subgroup.
\end{prop}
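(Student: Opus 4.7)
The plan is to exhibit, for each prime $t \leq n$, a Sylow $2$-subgroup $P$ and a Sylow $t$-subgroup $T$ of $S = \alt_n$ such that $\langle P, T\rangle = S$. The argument splits naturally: the case $t = 2$ reduces to the known fact that $\alt_n$ is generated by two of its Sylow $2$-subgroups for $n \geq 5$, so I focus on the case where $t$ is odd.

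Fix an odd prime $t \leq n$, and let $T$ be a Sylow $t$-subgroup of $S$ containing the $t$-cycle $\sigma = (1,2,\ldots,t)$. I would choose a Sylow $2$-subgroup $P$ containing a specific even permutation $\tau$ that links $\{1,\ldots,t\}$ to its complement, for instance a double transposition such as $\tau = (1,t+1)(2,t+2)$ when $n \geq t+2$, by incorporating $\tau$ into the standard iterated wreath product description of a Sylow $2$-subgroup of $\alt_n$. Write $G := \langle P, T\rangle$; I then verify $G = S$ in three stages.

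\emph{Transitivity.} The cycle $\sigma$ acts transitively on $\{1,\ldots,t\}$, the element $\tau \in P$ carries points of this set into its complement, and the orbits of $P$ itself have $2$-power lengths partitioning $\{1,\ldots,n\}$; combining these facts shows $G$ is transitive. \emph{Primitivity.} Because $t$ is prime, $\langle\sigma\rangle$ acts primitively on $\{1,\ldots,t\}$, so any $G$-block meeting $\{1,\ldots,t\}$ in two or more points must contain all $t$ of them; comparing this with the $2$-power orbit sizes of $P$ rules out every nontrivial block system. \emph{Jordan.} When $t \leq n-3$, the classical theorem of Jordan on primitive permutation groups containing a prime-length cycle gives $G \supseteq \alt_n$, hence $G = S$. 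For $t \in \{n, n-1, n-2\}$ (when these happen to be prime), I would instead invoke the classical characterisations of transitive groups containing an $n$-, $(n-1)$-, or $(n-2)$-cycle: each either contains $\alt_n$ or lies in a small listed overgroup (for example $\operatorname{AGL}_1(n) \cap \alt_n$ when $t = n$), and one arranges $P$ to avoid those overgroups.

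The main obstacle I anticipate is the careful choice of the Sylow $2$-subgroup $P$: one needs $\tau$ to sit inside a genuine Sylow $2$-subgroup of $\alt_n$ (not merely some $2$-subgroup containing $\tau$), and simultaneously needs $P$ to avoid the few small overgroups of $\langle\sigma\rangle$ that occur when $t$ is close to $n$. A handful of boundary situations, such as $n \leq 7$ or $t = 3$ with $n$ small, are not covered uniformly by this three-step argument and would have to be checked by direct inspection.
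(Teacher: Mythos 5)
There is a genuine gap at the \emph{transitivity} step, and it is fatal to the construction as you have set it up. You only control one $t$-cycle $\sigma$ inside $T$ and one double transposition $\tau$ inside $P$; the three facts you cite (transitivity of $\sigma$ on $\{1,\dots,t\}$, the linking element $\tau$, and the $2$-power orbit sizes of $P$) do not imply that $\langle P,T\rangle$ is transitive, because nothing coordinates the \emph{global} orbit structures of $P$ and $T$. Concretely, take $n=17$, $t=3$, $\sigma=(1,2,3)$, $\tau=(1,4)(2,5)$: choose $P$ to be a Sylow $2$-subgroup of $\alt_{16}$ (acting on $\{1,\dots,16\}$) containing $\tau$, which is a Sylow $2$-subgroup of $\alt_{17}$ fixing the point $17$, and choose $T$ to be a Sylow $3$-subgroup of $\alt_{17}$ containing $\sigma$ whose support lies in $\{1,\dots,16\}$ (orbits $\{1,2,3\}$, $\{4,\dots,12\}$, $\{13,14,15\}$); then every constraint you impose is satisfied, yet $\langle P,T\rangle$ fixes $17$ and is a proper subgroup. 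So you must make a genuinely coordinated choice, which is exactly what the paper does: it picks a $t$-element $x$ having a single cycle of length $t^a\ge\sqrt n$ (resp.\ $\ge n/2$ for $t=2$), notes that the Sylow $2$-subgroup $R$ has fewer than $\log_2 n$ orbits (of distinct sizes, for $n$ odd), and conjugates $x$ so that this long cycle meets \emph{every} $R$-orbit, forcing transitivity.

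Your primitivity step is also not a proof as written: for $n$ even (e.g.\ $n$ a power of $2$) a full Sylow $2$-subgroup of $\alt_n$ happily preserves block systems with blocks of $2$-power size, so ``comparing with the $2$-power orbit sizes of $P$'' cannot by itself rule out nontrivial blocks; one would have to bring in the whole of $T$ (order considerations) or, as the paper does, first treat $n$ odd — where a transitive overgroup of a full Sylow $2$-subgroup is automatically primitive — and then deduce the even case by induction, using that $\alt_{n-1}$ is maximal in $\alt_n$ so that $\langle R,T\rangle\supsetneq\alt_{n-1}$ forces equality. Finally, note that the paper avoids Jordan's cycle theorem and the delicate boundary cases $t\in\{n,n-1,n-2\}$ altogether by invoking Wielandt's minimal-degree theorem: for $n>8$ the only primitive subgroup of $\alt_n$ containing an element moving only $4$ points (a double transposition in $R$) is $\alt_n$ itself; your appeal to the classification of transitive groups containing an $n$-, $(n-1)$- or $(n-2)$-cycle, together with ``arranging $P$ to avoid those overgroups,'' would still need the order estimates written out, and your $t=2$ case quotes as ``known'' essentially the statement being proved.
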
 

\begin{proof}  For $n \le 8$, this is an easy exercise.  Suppose that $n > 8$ and is odd.
Let $R$ be a Sylow $2$-subgroup of $S$.   Note that the orbits of $R$ have distinct
sizes and there are at most  $\log_2 (n+1)$ orbits.   Choose $x$ a $t$-element so that $x$
has a cycle of length $t^a \ge \sqrt{n}$ for $t$ odd and at least $n/2$ if $t=2$.  In both
cases, the length of the cycle is greater than the number of orbits of $R$.  Thus, we can
choose a conjugate of $x$ so that an orbit of $x$ intersects every orbit of $R$.
Then $J:= \langle R, x \rangle$ is transitive.   Since $n$ is odd and $R \le J$,  $J$
is primitive.  Since $n > 8$, the only primitive subgroup of $\alt_n$ containing an element moving
$4$ points is $\alt_n$ \cite[13.5]{Wi}, whence the result.

Suppose that $n > 9$ is even.   By the odd case, $\alt_{n-1} = \langle R_0, T \rangle$ for some
  $t$-subgroup $T$  and  Sylow $2$-subgroup  $R_0$.  Let $R$ be a Sylow $2$-subgroup  of $S$ properly
  containing $R_0$.   Then $\langle R, T \rangle$ properly contains  $\langle R_0, T \rangle = \alt_{n-1}$,
  whence $\langle R, T \rangle = S$. 
\end{proof} 

For the remaining simple groups, we can prove a stronger generation result.   The analogous result
for alternating groups fails (for example, consider $\alt_{15}$ with $x$ a $3$-cycle or an involution moving
only four points).    Note that the next result also holds for quasisimple groups as long as $x$ is noncentral.

\begin{prop} \label{p:spor}
Let $S$ be a sporadic simple group
and let $P$ be a Sylow $2$-subgroup of $S$.
If $1 \ne x \in S$, then
$S = \langle P, x^g\rangle$ for some $g \in S$.
\end{prop}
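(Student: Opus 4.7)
The proof will be a computational case-check across the 26 sporadic simple groups, using GAP together with the ATLAS character-table library (for which Breuer is the standard reference source).

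The key reformulation is that $\langle P, x^g\rangle \ne S$ holds precisely when some maximal subgroup $M$ of $S$ contains both $P$ and $x^g$. So for each nontrivial conjugacy class $C = x^S$ it suffices to exhibit one conjugate of $x$ lying outside every maximal overgroup of $P$. The first step is therefore to enumerate, for each sporadic $S$, the (short) collection $\mathcal{M}(P)$ of maximal subgroups containing a fixed Sylow $2$-subgroup $P$. This data is available in the ATLAS for every sporadic group; even for the Monster the maximal subgroups of order divisible by $|P| = 2^{46}$ are known.

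The second step is a standard character-theoretic union bound. For each $M \in \mathcal{M}(P)$, the number of conjugates of $x$ lying in $M$ equals $|M|\chi_M(x)/|C_S(x)|$, where $\chi_M$ is the permutation character of $S$ on the cosets of $M$. Hence a uniformly random conjugate $x^g$ lies in $M$ with probability $|M|\chi_M(x)/|S|$, and by the union bound it is enough to verify, for every nontrivial class $C$,
\[
\sum_{M \in \mathcal{M}(P)} \frac{|M|\,\chi_M(x)}{|S|} \;<\; 1.
\]
The values $\chi_M(x)$ can be read off from the fusion data stored in the character table library for each pair $(S, M)$, so this check is entirely automatic.

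The hard part will be the residual cases: typically involutions, or other elements of small order, where the sum above exceeds $1$ because the conjugates of $x$ meet one or two of the overgroups $M$ too densely. For those I would sharpen the bound by an inclusion--exclusion estimate of $|x^S \cap (M_1 \cup \cdots \cup M_k)|$ (using products of permutation characters and the class multiplication constants), or else fall back on a direct constructive check: take standard generators of $S$ from the web ATLAS, construct an explicit $g \in S$, and compute the order of $\langle P, x^g\rangle$ in GAP. The Monster and Baby Monster are expected to be the most delicate, not for any conceptual reason but because of the number of classes and the size of the overgroups involved; each individual verification is nonetheless within reach of GAP, and the proof is essentially a well-organized table of such checks.
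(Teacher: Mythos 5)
Your proposal is correct and is essentially the paper's argument: a permutation-character union bound over the maximal subgroups of odd index (the overgroups of $P$), verified computationally with GAP and the ATLAS/character-table library, with the Monster treated separately via its five known classes of odd-index maximal subgroups and their known permutation characters. The paper finds that the crude bound already suffices for every nontrivial class in every sporadic group (the maximum of the left-hand side is $3/5$, attained at class 2B of $J_2$), so your anticipated residual cases and inclusion--exclusion or constructive fallbacks never arise; the only detail you gloss over is counting how many conjugates of each odd-index maximal subgroup contain the fixed $P$, which the paper handles by the bound $|\N_S(P)|/|\N_M(P)| \le [\N_S(P):P]$ with $[\N_S(P):P]$ read from Wilson's tables.
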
 

\begin{proof}
Let $S$ be a sporadic simple group,
fix a Sylow $2$-subgroup $P$ of $S$,
and let $x$ be a nonidentity element in $S$.
We use known information about maximal subgroups of $S$
to show that $x^S$ is not a subset of the union
of those maximal subgroups in $S$ that contain $P$.

Let $M$ be a maximal subgroup of $S$ with the property $P \le M$.
The number of $S$-conjugates of $M$ that contain $P$
is equal to $|\N_S(P)|/|\N_M(P)| \le [\N_S(P):P]$,
thus these subgroups can contain at most
$[\N_S(P):P] |x^S \cap M|$ elements from the class $x^S$.

Thus the number of elements in $x^S$ that generate a proper subgroup of $S$ 
together with $P$ is bounded from above by
$[\N_S(P):P] \sum_M |x^S \cap M|$,
where the sum is taken over representatives $M$ of conjugacy classes
of maximal subgroups of odd index in $S$.

Let $1_M^S$ denote the permutation character of $S$ on the cosets of $M$.
We have $|x^S \cap M| = |x^S| 1_M^S(x) / 1_M^S(1)$.
Hence we are done when we show that
\[
   [\N_S(P):P] \sum_M 1_M^S(x) / 1_M^S(1) < 1 \tag{$\ast$}\label{inequality}
\]
holds.

The numbers $[N_S(P):P]$ can be read off from \cite[Table I]{Wil98}.
For all sporadic simple groups $S$ except the Monster group,
the permutation characters $1_M^S$ can be computed from the data about
maximal subgroups contained in the library of character tables \cite{CTblLib}
of the \textsc{GAP} computer algebra system \cite{GAP}.
The Monster group is known to contain exactly five classes
of maximal subgroups of odd index, of the structures
$2^{1+24}.Co_1$, $2^{10+16}.O_{10}^+(2)$, $2^{2+11+22}.(M_{24} \times S_3)$,
$2^{5+10+20}.(S_3 \times L_5(2))$, $[2^{39}].(L_3(2) \times 3S_6)$,
and the corresponding permutation characters are known,
see \cite{PermChars}.

We are lucky, the crude upper bound from (\ref{inequality}) is smaller than $1$
for each $S$ and $x$ in question.
(In fact, the left hand side is much smaller than needed;
the maximum of the left hand side is $3/5$,
it is attained for $x$ in the class \texttt{2B} of the group $J_2$.)
\end{proof}

We next need an elementary general result.

\begin{lem} \label{l:twosub}   Let $G$ be a group with proper subgroups $H_1$ and $H_2$.
Let $C$ be a   conjugacy class of $G = \langle C \rangle$.     Then $C$ is  not contained
in $H_1 \cup H_2$.
\end{lem}

\begin{proof}  Let $C_i = C \cap H_i$ and suppose that $C = C_1 \cup C_2$. 
Let $D= C_1 \cap C_2$.   Since $H_i$ is a proper subgroup, $C \ne C_i$.  
This implies that each $C_i$ properly contains $D$.

Note that
$H_2$ normalizes the subgroup $L$  generated by $C_1':=C_1 \setminus D$ (since 
$C$ is the disjoint union of $C_1'$ and $C_2$).

Then $C \subset H_2L$, whence $G=H_2L$ and $L$ is normal in $G$.
Since $L$ intersects $C$, this implies that $C \subset L$ and so $L=G$.
Since $L \le H_1$, this is a contradiction and the result holds.  
\end{proof}

An immediate consequence is the following.

\begin{cor}\label{c:twosub}  Let $G$ be a  simple group with $C$ a nontrivial
conjugacy class of $G$.  If $H_1$ and $H_2$ are proper subgroups of $G$,
then $C$ is not contained in $H_1 \cup H_2$.
\end{cor}

Finally we turn to the case of finite groups of Lie type.   We will typically work
with the simply connected groups and rather than use the prime $2$ as the base
prime (which almost certainly would work), we use the prime $p$ that is the characteristic
of the group.   It is proved in \cite[Thm. A]{Gu} that any nonabelian finite simple group can be
generated by an involution and a Sylow $2$-subgroup.

\begin{prop}  \label{p:lie}  Let $S$ be a finite simply connected quasisimple group of Lie type
in characteristic $p$.   Let $U$ be a Sylow $p$-subgroup of $G$.  If $x \in S$ is a noncentral
element of $S$, then $S = \langle U, x^g \rangle$ for some $g \in S$. 
\end{prop}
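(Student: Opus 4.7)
The plan is to extend the sporadic-case counting argument of Proposition~\ref{p:spor}, but using the Borel-Tits theorem in place of a direct list of overgroups of $P$. First I would reduce to the simple quotient $L = S/Z(S)$. Since $S$ is quasisimple, one has $Z(S) \le \Phi(S)$: any maximal $M < S$ with $Z(S) \not\le M$ would satisfy $MZ(S) = S$, giving a transitive action of $S$ on $S/M$ that is faithful modulo $Z(S) \cap M$, which bounds $|S/Z(S)|$ by $|Z(S)|!$ and is absurd for the quasisimple Lie type groups in question. Consequently, if a preimage $H = \langle P, x^g \rangle$ maps onto $L$, then $HZ(S) = S$, so $H\Phi(S) = S$ and $H = S$. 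Since $x$ is noncentral its image in $L$ is nontrivial, so it suffices to treat the simple case.

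Assume now that $S$ is simple. Let $B = \N_S(P)$ be the Borel subgroup containing $P$, and let $P_1, \ldots, P_r$ be the maximal parabolic subgroups of $S$ containing $B$, one for each node of the Dynkin diagram. By the Borel-Tits theorem, every proper subgroup of $S$ containing the Sylow $p$-subgroup $P$ is contained in some $P_i$. The number of $g \in S$ with $x^g \in P_i$ equals $|C_S(x)|\cdot|x^S \cap P_i|$, so a union bound reduces the proposition to the inequality
\[
\sum_{i=1}^{r} \frac{|x^S \cap P_i|}{|x^S|} \;<\; 1
\qquad \text{for every noncentral } x \in S.
\]
Each summand is the fixed-point ratio $1_{P_i}^S(x)/1_{P_i}^S(1)$ of the primitive action of $S$ on the flag variety $S/P_i$.

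For these ratios there is a very sharp body of work (Liebeck-Saxl, Guralnick-Kantor, Liebeck-Shalev, Lawther-Liebeck-Seitz, Burness and coauthors) bounding $|x^S \cap P_i|/|x^S|$ by an expression of the form $c\, q^{-a_i}$, where $a_i$ is a positive integer tied to the dimension of the corresponding Schubert cell. Summed over the $r \le \mathrm{rk}(S)$ maximal parabolics containing $B$, this comfortably beats $1$ as soon as the rank or $q$ is moderately large.

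The main obstacle is the finite list of small-rank, small-characteristic exceptions --- groups like $E_8(2)$, or the classical groups of rank at most $3$ over $\mathbb{F}_2, \mathbb{F}_3$ --- where a generic estimate can fail. For these I would mimic Proposition~\ref{p:spor}: exploit the Levi decomposition $P_i = L_i U_i$, in which the fusion of $S$-conjugacy classes into $P_i$ is well understood, to compute or estimate $1_{P_i}^S$ directly, and verify the displayed inequality case by case, splitting $x$ according to whether it is unipotent, semisimple, or mixed; in the very smallest cases a computer algebra check against the character table library, exactly as in Proposition~\ref{p:spor}, finishes the job.
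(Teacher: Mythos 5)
Your plan works for the exceptional groups --- there it is exactly what the paper does, quoting Lawther--Liebeck--Seitz to get $\sum_i |x^S\cap P_i|/|x^S|<1$ over the (at most $8$) maximal parabolics containing a fixed Borel --- but for the classical groups the union-bound inequality you are relying on is simply false, and not just for a finite list of small groups. Take $S=\SL_n(2)$ and $x$ a transvection with hyperplane $H$ and centre $c$. An $i$-space is fixed by $x$ iff it lies in $H$ or contains $c$, so the fixed-point ratio on the Grassmannian of $i$-spaces is roughly $2^{-i}+2^{-(n-i)}$; summing over the $n-1$ maximal parabolics containing a fixed Borel gives a total tending to about $2$. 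Already for $\SL_4(2)$ the exact values are $7/15,\ 11/35,\ 7/15$, whose sum is $131/105>1$. The same phenomenon occurs for long root elements in the other classical groups over $\mathbb{F}_2$ (and is borderline for $q=3$), so the failure set has unbounded rank: your hope that ``rank or $q$ moderately large'' saves the estimate is misplaced, because increasing the rank adds more parabolic terms without shrinking the dominant ones, and consequently the residual cases cannot be dispatched by a finite computer check as in the sporadic case. Note the proposition is still true in these cases --- e.g.\ the transvection with $\phi=e_1^{*}$, $c=e_n$ lies in no standard maximal parabolic --- but the counting argument cannot see this.

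This is precisely why the paper treats classical groups by a direct geometric construction rather than by counting: the parabolics containing the fixed Sylow $p$-subgroup $T$ are the stabilizers of the members of one fixed (totally singular) flag $V_1\subset\cdots\subset V_n$ built from a vector $v_1$ fixed by $T$, and one exhibits a single conjugate $x^g$ avoiding all of them simultaneously --- for $\SL_n(q)$ by $2$-transitivity on points one arranges $x^g v_1\notin\langle v_1,\ldots,v_{n-1}\rangle$, and for the other classical groups one analyzes $xv_1=w$ case by case (with a small variation for the two half-spin parabolics in type $D$), using transitivity on singular vectors to push $w$ out of $V_n$. If you want to salvage your write-up, keep the LLS counting for exceptional types but replace the classical-group step by an existence argument of this kind. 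Two minor further points: the reduction modulo $Z(S)$ is unnecessary (the statement that every maximal overgroup of a Sylow $p$-subgroup is parabolic is Tits's lemma for groups with a split $BN$-pair, quoted in the paper from Seitz, and applies directly to the quasisimple group), and ``Borel--Tits'' strictly speaking concerns subgroups with nontrivial unipotent radical, so you should cite Tits's lemma for the overgroup statement you actually use.
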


\begin{proof}   By a lemma of Tits \cite[1.6]{Se},    any maximal subgroup
containing $U$ is a parabolic subgroup and there is precisely one parabolic
subgroup  containing $U$ in each conjugacy class of maximal parabolic subgroups.
In particular, 
$B=N_G(U)$, the Borel subgroup containing $U$ is contained in every maximal
subgroup containing $U$. 

If $S$ has twisted rank at most $2$, then $B$  is contained in at most two maximal
subgroups and Corollary \ref{c:twosub} implies that some conjugate of $x$ is in neither
maximal subgroup. 
 So we may assume that the rank is at least $3$.  
 
 We first consider the case that $S$ is a classical group with natural module $V$.     Let $\mathcal{F}$ be
 a maximal flag fixed by $B$.   First assume
 that $S$ is not an orthogonal group of $+$ type.     
 Let $W$ be the unique maximal element in the flag.  So $W$ is a maximal totally singular space
 (or hyperplane in the case $S=\SL$).  
 We may assume that $x$ does not fix $W$.   Thus, there exists a line $L \subset W$ with $xL$ not
 contained in $W$.   Let $\mathcal{F'}$  be a maximal flag with $1$-space $L$ and $W$ in the flag.
 Then $x$ does not fix any subspace in the flag and so $x$ is not contained in any maximal parabolic
 subgroup containing $B$ and the result follows.

Suppose that $G= \Omega_{2m}^+(q)$ with $m \ge 3$.   If $m=3$,  the results follows from the
result for $\SL_4(q)$.   So $m \ge 4$.     Let $\mathcal{F}$ be a maximal flag corresponding to
$B$.   Let $W_1$ and $W_2$ be the two maximal singular subspaces in the flag.   By 
Corollary \ref{c:twosub},  we may assume that $x$ fixes neither $W_1$ nor $W_2$.  Since
$W_1$ and $W_2$ are not in the same $G$-orbit,  $xW_i \ne W_j$ for any $i,j$. 
Let $W = W_1 \cap W_2$.  We claim that $xW$ is not contained in $W_1 \cup W_2$.
Since $xW$ is not the union of two proper subspaces, it suffices to show that $xW$ is not contained
in $W_1$ or $W_2$. 
Note that $xW$ is contained in a unique maximal totally singular subspace of each type
(because $W^{\perp}/W$ is a $2$-dimensional nondegenerate space and has precisely
two singular lines). 
Since $xW \subset xW_i \ne W_i$,  $xW$ is not contained in $W_i$ and the claim follows.
Thus we can choose a line $L \subset W$
so that $xL$ is contained in neither $W_1$ or $W_2$.   Consider a maximal flag
starting with $L$ and with maximal elements $W_1$ and $W_2$.  Thus, $x$ fixes
no subspace in this flag and so $x$ and the Borel subgroup corresponding to that
flag generate $G$. 
 
Finally suppose that $S$ is an exceptional group of  twisted rank $n$.  As above, we may
assume that $n > 2$.    Let $P_1, \ldots, P_n$ be
the distinct maximal parabolic subgroups containing $U$.  Let $C = x^S$ for $x$
a noncentral element of $S$.    It follows by \cite[Thm. 1]{LLS} that
$$
\sum_{i=1}^n  \frac{|C \cap P_i|}{|C|} < 1.
$$
Thus,  $C \ne \cup_{i=1}^n (C \cap P_i)$ and the result follows.
\end{proof}

It remains to consider the special cases where $S$ is a simply connected group but $S/Z(S)$ is not
simple.   If $S$ is solvable, then we can ignore the case. This  leaves the groups $\Sp_4(2)$, $G_2(2)$,
 ${^2}G_2(3)$ and ${^2}F_4(2)$.   The first three cases have socle $\alt_6, \PSU_3(3)$ and $\PSL_2(8)$
 and we have already proved the result for these groups.   In the final case, the socle
 is the derived subgroup of index $2$ but  precisely the same argument as above applies
 (i.e. there are only two maximal subgroups containing a Sylow $2$-subgroup which are parabolic
 subgroups intersected with the derived subgroup). 
This completes the proof of Theorem \ref{t:simple}.

\section{Lifting Results}

Let $f:G \rightarrow H$ be a surjective homomorphism of finite groups with $H$ a $\pi$-group.

We first note the well known result which follows easily from the Schur-Zassenhaus theorem \cite[18.1]{As}
and asserts the existence of $M$ in the case that $\ker(f)$ is a $\pi'$-group.  

\begin{lem} \label{l:lift0}  There exists a $\pi$-subgroup $M$ with $f(M)=H$. 
\end{lem}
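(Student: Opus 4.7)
I would prove Lemma \ref{l:lift0} by induction on $|G|$. The base case $|G|=1$ is trivial, and if $K=1$ we take $M=G$, so assume $K \ne 1$ and pick any prime $p$ dividing $|K|$ together with a Sylow $p$-subgroup $P$ of $K$. The plan is then to split into two cases depending on whether $P$ is normal in $G$.

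In the non-normal case, $N := N_G(P) < G$, so Frattini's argument gives $G = K N$, which means $f(N) = H$. The kernel of the restriction $f|_N$ is $N \cap K = N_K(P)$, and $N / N_K(P) \cong H$ is a $\pi$-group. Since $|N| < |G|$, the inductive hypothesis applied to $f|_N \colon N \to H$ produces a $\pi$-subgroup $M \leq N \leq G$ with $f(M) = H$, finishing this case.

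In the normal case $P \trianglelefteq G$, we pass to the quotient $\bar{f} \colon G/P \to H$, which is well defined because $P \leq K$, with kernel $K/P$. By induction (applied to the smaller group $G/P$) there is a $\pi$-subgroup $\bar{M} \leq G/P$ with $\bar{f}(\bar{M}) = H$; let $M_1$ be its preimage in $G$. If $p \in \pi$, then $M_1$ is an extension of the $\pi$-group $P$ by the $\pi$-group $\bar{M}$, so $M_1$ is already a $\pi$-group mapping onto $H$. If instead $p \in \pi'$, then in the extension $1 \to P \to M_1 \to \bar{M} \to 1$ the orders of $P$ and $\bar M$ are coprime, so the Schur--Zassenhaus theorem (as cited in the paper) yields a complement $M \leq M_1$; this $M \cong \bar{M}$ is a $\pi$-subgroup, and $f(M) = f(M_1) = H$ since $P \leq K$.

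There is no real obstacle here: the argument is a straightforward prime-by-prime reduction using Frattini plus the coprime version of Schur--Zassenhaus in the one place where it bites (normal Sylow $\pi'$-subgroup of $K$). The only point that requires a moment of care is making sure the induction strictly decreases $|G|$: in Case 1 this is automatic because $N_G(P) < G$, and in Case 2 we pass to $G/P$ with $P \ne 1$, so both recursive calls are to proper sections.
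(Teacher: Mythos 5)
Your proof is correct, and it takes essentially the same route as the paper: the paper simply records Lemma \ref{l:lift0} as an easy consequence of the Schur--Zassenhaus theorem (which handles the case where $\ker(f)$ is a $\pi'$-group), and your prime-by-prime induction via the Frattini argument plus the coprime complement is exactly the standard way of filling in that reduction. No gaps; the only remark worth making is that in your Case 2 the normal subgroup $P$ is a $p$-group, so only the elementary (solvable-kernel) form of Schur--Zassenhaus is ever needed.
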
  

 We need a slightly stronger version of  this result for our application.  We show that we can take 
 $M$ to be an intravariant $\pi$-group.   This depends on the conjugacy of $\pi$-complements in the 
 Schur-Zassenhaus result (and so depends on the Feit-Thompson result that groups of odd order 
 are solvable).

 \begin{lem} \label{l:lift1}    Let $f:G \rightarrow H$ be a surjective homomorphism of finite groups with 
 $H$ a $\pi$-group.   Then there exists an intravariant $\pi$-subgroup $M$ of $G$ such
 that $f(M)=H$.  Moreover, we may assume $M \ge O_{\pi}(G)$. 
 \end{lem}
 
 \begin{proof}  Note that if we show that $M$ exists, we can always assume that $M \ge O_{\pi}(G)$. 
 
 Let $A=O_{\pi'}(G)$.  If $A \ne 1$,  then by induction there exists an intravariant $\pi$-subgroup 
 $L/A$ of $G/A$ with $f(L)=H$.  Note that $L$ is intravariant.  By the Schur-Zassenhaus theorem,
 $L=AM$ where $M$ is a $\pi$-complement to $A$ and is unique up to conjugacy.  Since $L$ is intravariant,
 so is $M$ and the result follows.
 
 So we may assume that $A=1$.  Let $B=O_{\pi}(G)$.  If $B \ne 1$,   by induction, there exists an intravariant
 $\pi$-subgroup $L/B$ of $G/B$ with $f(L/B) = H/f(B)$ and so $f(L)=H$.  Clearly $L$ is intravariant.  
 So we may assume that $B=1$ as well.   In particular, $F(G)=1$ and $1 \ne E = E(G)$.     
 Moreover, any component of $E$ is divisible by primes in $\pi$ and $\pi'$ and so $E \le \ker(f)$. 
 
 Let $1 \ne R$ be a Sylow $r$-subgroup of $E$.   So $R$ is intravariant and thus so is $N_G(R)$.
 By the Frattini argument,  $G=EN_G(R)$ and $N_G(R)$ is proper in $G$ since $O_r(G)=1$.   By induction, there exists an intravariant
 $\pi$-subgroup of $N_G(R)$    which surjects onto $H$ and the result follows.  
 \end{proof}

The corresponding result for solvability is the following:

\begin{lem} \label{l:lift2}  Let $f:G \rightarrow H$ be a surjective homomorphism with $H$ solvable.
 Let $A$ be the solvable radical of $G$.  
Then there exists $M \le G$ with $M \ge A$ solvable and intravariant with $f(M)=H$.
\end{lem}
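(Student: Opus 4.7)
My plan is to follow the blueprint of the proof of Lemma \ref{l:lift1}. First, since $K$ is normal in $G$ and $J$ is intravariant in $K$, conjugation by any $g \in G$ restricts to an automorphism of $K$ and hence sends $J$ to a $K$-conjugate of itself; so $G = K \cdot N_G(J)$ and $f(N_G(J)) = H$. Replacing $G$ by $N_G(J)$, I may assume $J$ is normal in $G$. Passing to $G/J$, the induced map $\bar{f}\colon G/J \to H$ is surjective with kernel $K/J$; if I can produce a solvable $\bar{M} \le G/J$ with $\bar{f}(\bar{M}) = H$, then its preimage $M \le G$ contains $J$, has $M/J \cong \bar{M}$ solvable, and is therefore solvable (as an extension of the solvable group $J$ by $\bar{M}$) and satisfies $f(M) = H$. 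So the problem reduces to the case $J = 1$.

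For this reduced case, I would play the role that Lemma \ref{l:lift0} (Schur--Zassenhaus) plays in Lemma \ref{l:lift1} with a Frattini-subgroup argument instead. Take $M \le G$ minimal subject to $f(M) = H$, and claim that $M \cap K \le \Phi(M)$. If not, some maximal subgroup $M' < M$ fails to contain $M \cap K$. Since $M \cap K$ is normal in $M$, the product $M'(M \cap K)$ is a subgroup strictly containing $M'$, so by maximality it equals $M$; but then $f(M') = f(M'(M \cap K)) = f(M) = H$ (using $M \cap K \le \ker f$), contradicting the minimality of $M$. Since $\Phi(M)$ is nilpotent in any finite group, $M \cap K$ is solvable, and together with $M/(M \cap K) \cong H$ solvable this gives $M$ solvable.

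There is no substantive obstacle: minimality combined with the nilpotency of the Frattini subgroup does all the work in two lines, and the intravariance reduction is identical to the one in Lemma \ref{l:lift1}. The only small points to keep straight are that $M \cap K$ is normal in $M$ (needed so that $M'(M \cap K)$ is a subgroup) and that $M \cap K \le \ker f$ (needed to conclude $f(M'(M \cap K)) = f(M')$).
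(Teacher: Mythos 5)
Your proof is correct, but its core step is genuinely different from the paper's. You share the opening move --- the Frattini-type argument that intravariance of $J$ in $K$ gives $G = K\,N_G(J)$, hence $f(N_G(J)) = H$, allowing the reduction to $J \trianglelefteq G$ and then to the quotient $G/J$ --- but you then settle the reduced statement (a surjection onto a solvable group admits a solvable subgroup mapping onto it) in one stroke: take $M$ minimal with $f(M)=H$, show $M \cap \ker f \le \Phi(M)$ by the standard maximal-subgroup argument, and conclude $M$ is solvable because $\Phi(M)$ is nilpotent and $M/(M\cap\ker f)\cong H$ is solvable. The paper instead runs a short induction on a minimal normal subgroup $A$ of the kernel: if $A$ is abelian it passes to $G/A$ and pulls back, and if $A$ is a product of nonabelian simple groups it again invokes the Frattini argument with $N_G(J)$ and induction. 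Notably, the minimal-subgroup/Frattini-subgroup device you use is exactly the one the paper alludes to in the discussion just before Lemma~\ref{l:lift1} (there combined with Schur's theorem to get $\pi$-subgroups), so your route is very much in the paper's spirit; its advantage is that it is non-inductive and self-contained, and in particular it sidesteps the question of whether $N_G(J)$ is proper, which is the least transparent point in the paper's terse handling of the nonabelian case. The paper's induction, on the other hand, treats $J$ and the solvable covering simultaneously without ever forming the quotient $G/J$. Both arguments are elementary and short; your two small "points to keep straight" (normality of $M\cap K$ in $M$ and $M\cap K\le\ker f$) are indeed all that is needed.
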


\begin{proof}    If $A \ne 1$, the result holds by induction for the map from $G/A \rightarrow H/f(A)$.
Thus, there exists an intravariant subgroup $L$ containing $A$ such that $L/A$ surjects onto $H/f(A)$, 
whence $f(L)=H$ and the result follows. 

If $A =1$, then $1 \ne E = E(G)$ is a direct product of nonabelian simple groups.    Thus, $f(E)=1$.  
Let $1 \ne R$ be a Sylow $r$-subgroup of $E$ for some prime $r$.   Then $G = EN_G(R)$.
Since $E$ is characteristic in $G$,  $N_G(R)$ is intravariant.  By induction, there
exists an intravariant solvable subgroup $M \le N_G(R)$ with $f(M)=H$ and the result follows.  
 \end{proof} 

\section{Proof of  Theorem \ref{t:main1}  } \label{s:main1}

We first prove an elementary result relating to intravariance of subgroups.

\begin{lem} \label{l:intra1}   Let $G$ be a group with $X$ an intravariant subgroup of $G$.
If $Q \le A \le X$,  $A$ is characteristic in $G$ and $Q$ is intravariant in $A$, then
$Q$ and $N_X(Q)$ are intravariant in $G$.
\end{lem} 

\begin{proof}  Since $Q$ is intravariant in $A$ and $A$ is normal
in $X$,  $X=AN_X(Q)$.  Let $a$ be an automorphism of $G$.  By modifying $a$ by inner automorphism,
we may assume that $a$ leaves $X$ invariant.   
Since $Q^a$ and $Q$
are conjugate in $A$, we may modify $a$ by an inner automorphism and assume that
$a$ normalizes $Q$ and $X$ and so also $N_X(Q)$ and the result follows.
\end{proof}
 
We now prove Theorem \ref{t:main1}.  Fix a set of primes $\pi$ and let $G$ be a counterexample of minimal order.  \\

\noindent
Step 1.   $O_{\pi}(G) = 1$.  \\

If not, let $1 \ne A$ be a minimal characteristic subgroup of $G$ with $A$ a $\pi$-group. 
By minimality,  $G/A = \langle P/A,  R/A \rangle$ where $P$ and $R$ are subgroups of $G$
with $P/A$ and $R/A$ intravariant in $G/A$ and $P/A$ a $\pi$-group and $R/A$ a $\pi'$-group.
Since $A$ is characteristic in $G$,  $P$ and $R$ are intravariant in $G$.   
Clearly $P$ is a $\pi$-group.   By the Schur-Zassenhaus theorem,   $R = AQ$ where $Q$ is
a $\pi'$-group.   Moreover, the conjugacy part of the Schur-Zassenhaus theorem (which requires
the Feit-Thompson theorem), implies that $Q$ is intravariant in $G$.    Clearly, $G= \langle P, Q \rangle$. \\

Step 2.  $O_{\pi'}(G)=1$.\\

Interchange $\pi$ and $\pi'$ in Step 1. \\

Step 3.  Completion of the proof. \\

Let $A$ be a minimal characteristic subgroup of $G$.  
 By Steps 1 and 2,  we may assume that $F(G)=1$ and  $A = L \times \ldots \times L$
where $L$ is a nonabelian simple group of order divisible by primes in $\pi$ and $\pi'$.

 By Theorem \ref{t:simple}, 
$A = \langle Q, M \rangle$ where $Q$ is a Sylow $p$-subgroup of $A$ and $M$ is a Sylow $r$-subgroup
of $A$ for some $r \ne p$ where we may take $p \in \pi$ and $r \in \pi'$.  

By induction,  $G/A = \langle X/A, Y/A  \rangle$ where $X/A$ and $Y/A$ are intravariant subgroups
of $G/A$ with $X/A$ a $\pi$-group and $Y/A$ a $\pi'$-group.   Since $A$ is characteristic in $G$,
this implies that $X$ and $Y$ are intravariant subgroups of $G$.    Since $X/A$ is a $\pi$-group, 
$A$ is the subgroup of $X$ generated by all $\pi'$-elements and so is characteristic in $X$.
Similarly, $A$ is characteristic in $Y$.   

By the Frattini argument,  $X=AN_X(Q)$.  By Lemma \ref{l:intra1}, $N_X(Q)$ is intravariant in
$G$.   By Lemma \ref{l:lift1}, there exists an intravariant 
$\pi$-subgroup $X_1$ of $N_X(Q)$ with  $X=AX_1$, 
 $Q \le O_{\pi}(N_X(Q)) \le X_1$ and $X_1$ intravariant.       

The same argument replacing $Q$ by $M$ and $\pi$ by $\pi'$ allows to assert there exist
subgroups $X_1 \le X$ and $Y_1 \le Y$ such that:
\begin{enumerate}
\item $Q \le X_1$ and $X_1$ is  an intravariant $\pi$-subgroup of $G$ with $X=X_1A$, and
\item $M \le Y_1$  and $Y_1$ is an intravariant $\pi'$-subgroup of $G$ with $Y=Y_1A$.
\end{enumerate}

Since $A = \langle Q, M \rangle$,  $A \le \langle X_1, Y_1 \rangle$.   Since  $G/A = \langle X_1A/A, Y_1A/A \rangle$,
it follows that  $G= \langle X_1, Y_1 \rangle$ as required. 

\section{Free profinite groups} \label{s:free} 

We now prove Theorem \ref{t:main2}.  The proof is essentially the same as in the previous section but
we need to use Iwasawa's criterion \cite[p. 84]{Ri} to deduce freeness.  

In order to apply this criterion for Theorem \ref{t:main2}, we first need to note the following:

\begin{lem} \label{l:lift3}  Let $G$ be a finite group and $A$ a nontrivial normal subgroup.   If $G/A = \langle X/A, Y/A \rangle$
where $X/A$ is a $\pi$-group and $Y/A$ is a $\pi'$-group, then there exist $X_1 \le X$ a $\pi$-group  and $Y_1 \le Y$
a $\pi'$-group so that $G = \langle X_1, Y_1 \rangle$ and $X_1A=X$ and $Y_1A = Y$. 
\end{lem}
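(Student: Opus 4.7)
The plan is induction on $|G|$. Choose a minimal normal subgroup $B$ of $G$ with $B \le A$ and pass to $\bar{G} := G/B$, writing $\bar{A}, \bar{X}, \bar{Y}$ for the corresponding images. If $\bar{A} \ne 1$, the inductive hypothesis applied to $(\bar{G}, \bar{A})$ produces a $\pi$-subgroup $\bar{X}_1 \le \bar{X}$ and a $\pi'$-subgroup $\bar{Y}_1 \le \bar{Y}$ with $\bar{G} = \langle \bar{X}_1, \bar{Y}_1 \rangle$, $\bar{X}_1 \bar{A} = \bar{X}$, and $\bar{Y}_1 \bar{A} = \bar{Y}$; if instead $\bar{A} = 1$ (so $A = B$), the same conditions are trivially satisfied by $\bar{X}_1 := \bar{X}$ and $\bar{Y}_1 := \bar{Y}$. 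Let $X_1^* \le X$ and $Y_1^* \le Y$ be the corresponding full preimages, so that $B \le X_1^* \cap Y_1^*$, $X_1^*/B$ is a $\pi$-group, $Y_1^*/B$ is a $\pi'$-group, $X_1^* A = X$, and $Y_1^* A = Y$. The task is now to trim $X_1^*$ to a $\pi$-subgroup $X_1$ and $Y_1^*$ to a $\pi'$-subgroup $Y_1$ satisfying $X_1 A = X$, $Y_1 A = Y$, and $B \le \langle X_1, Y_1 \rangle$; since $\langle X_1, Y_1 \rangle B = G$ is automatic from the quotient, the last inclusion will then force $\langle X_1, Y_1 \rangle = G$.

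If $B$ is a $\pi$-group, then $X_1^*$ itself is a $\pi$-group, so I set $X_1 := X_1^*$ (which contains $B$) and apply Lemma~\ref{l:lift0} to the surjection $Y_1^* \to Y_1^*/B$ to extract a $\pi'$-subgroup $Y_1 \le Y_1^*$ with $Y_1 B = Y_1^*$; the relations $X_1 A = X$ and $Y_1 A = Y_1 B A = Y_1^* A = Y$ follow from $B \le A$. The case $B$ a $\pi'$-group is symmetric.

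The main obstacle is the remaining case, where $B$ is neither a $\pi$-group nor a $\pi'$-group: then $B \cong L^k$ for some nonabelian simple group $L$ whose order is divisible by primes in both $\pi$ and $\pi'$. Here I invoke Theorem~\ref{t:simple} to choose a prime $p \in \pi$ and a prime $s \in \pi'$, each dividing $|L|$, such that $L$ is generated by a Sylow $p$-subgroup and a Sylow $s$-subgroup (the distinguished prime provided by Theorem~\ref{t:simple} for $L$ lies in one of $\pi, \pi'$, and the other prime is chosen freely from the complementary set among divisors of $|L|$). The standard subdirect-product argument, also used in the proof of Theorem~\ref{t:main1}, upgrades this to $B = \langle Q, M \rangle$ with $Q$ a Sylow $p$-subgroup of $B$ and $M$ a Sylow $s$-subgroup. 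Since $Q$ and $M$ are Sylow subgroups of $B$, they are intravariant in $B$, so Lemma~\ref{l:lift1} applied to the surjection $X_1^* \to X_1^*/B$ with $J = Q$ yields a $\pi$-subgroup $X_1 \le X_1^*$ with $Q \le X_1$ and $X_1 B = X_1^*$, and symmetrically a $\pi'$-subgroup $Y_1 \le Y_1^*$ with $M \le Y_1$ and $Y_1 B = Y_1^*$. Then $B = \langle Q, M \rangle \le \langle X_1, Y_1 \rangle$ closes the argument, while the relations $X_1 A = X_1 B A = X$ and $Y_1 A = Y$ are immediate.
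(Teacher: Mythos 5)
Your proof is correct and follows essentially the paper's route: the paper reduces to the case that $A$ is a minimal normal subgroup and then reruns the three-step argument from the proof of Theorem~\ref{t:main1} (Schur--Zassenhaus when the minimal normal subgroup is a $\pi$- or $\pi'$-group, and Theorem~\ref{t:simple} together with Lemma~\ref{l:lift1} in the mixed nonabelian case), which is exactly the case analysis you carry out, with the inductive passage to $G/B$ making the reduction explicit.
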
 

\begin{proof}   Clearly, we may 
assume that $A$ is a minimal normal subgroup of $G$.   The three steps in the previous section give
the result (there we assumed that $A$ was characteristic to deduce that the
subgroups were intravariant but the proof is identical).  
\end{proof} 

This lemma can be reformulated in the following way.   Let  $F(\pi)$ (respectively $F(\pi')$)  denote the free pro-$\pi$ group
(respectively the free pro-$\pi'$ group) 
on countably many generators and let $F=F(\pi)*F(\pi')$.   Let $G$ be a finite group and $A$ a normal subgroup of $G$.
Suppose that $\alpha:F \rightarrow G/A$ is surjective.   Let $X/A = \alpha(F(\pi))$ and $Y/A = \alpha(F(\pi'))$.  Thus
$G/A = \langle X/A, Y/A \rangle$.   Since $F(\pi)$ and $F(\pi')$ are free pro-$\pi$ (free pro-$\pi'$), it follows
by Lemma \ref{l:lift3}  that  there exist lifts of $\alpha$,
 $\beta: F(\pi) \rightarrow  X$ and
$\gamma: F(\pi') \rightarrow Y$ so that the images of $\beta$ and $\gamma$ generate $G$.  Thus, $\beta*\gamma$
is a lift of $\alpha$  from $F$ onto $G$.    Iwasawa's criterion implies that $F$ is a free profinite group.  Thus, Theorem \ref{t:main2}
follows. 

If $\pi$ and $\pi'$ are both nonempty, $F(\pi)$ and $F(\pi')$ are both projective profinite groups but neither is free. 

Similarly, one proves the analogous statement with $X/A$ and $Y/A$ solvable using 
Lemma \ref{l:lift2}.   Note by taking $\pi=\{2\}$,  Lemma \ref{l:lift3} already proves a solvable lifting theorem.

\begin{lem} \label{l:lift4}  Let $G$ be a finite group and $A$ a nontrivial normal subgroup.   If $G/A = \langle X/A, Y/A \rangle$
where $X/A$ and $Y/A$ are solvable,  then there exist $X_1 \le X$ and $Y_1 \le Y$ with $X_1$ and $Y_1$ both solvable 
 so that $G = \langle X_1, Y_1 \rangle$ and $X_1A=X$ and $Y_1A = Y$. 
\end{lem} 

Essentially the same proof gives a lifting result for pairs of subgroups where $X$ is a $\pi$-group and $Y$ is a solvable
group as long as $\pi$ contains a prime dividing the  order of the simple groups occurring.  In particular if $2 \in  \pi$,
this is fine.  Since every nonabelian finite simple group has order divisible by $3$ except for the Suzuki groups,
which have order divisible by $5$,   the lifting lemma goes through if $\{3,5\} \subseteq \pi$ as well.  

Now Theorem \ref{t:main3} follows.

\section{Alternating Groups}

In this section, we prove:

\begin{thm} \label{t:altsyl}   Let $n \ge 5$ and let $p$ and $q$ be primes with $p \le q \le n$.
Then $G:=\alt_n$ can be generated by a Sylow $p$-subgroup and a Sylow $q$-subgroup.
\end{thm}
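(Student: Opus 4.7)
By Proposition~\ref{p:alt}, I may assume both $p$ and $q$ are odd primes, so $3 \le p \le q \le n$. The plan is to argue by strong induction on $n$, with a finite collection of small-$n$ base cases verified by direct inspection. The inductive step splits according to whether $p$ or $q$ divides $n$.

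\emph{Case A: $p \mid n$ or $q \mid n$.} Without loss of generality, after swapping $p$ and $q$ if necessary, assume $p \mid n$ and $q \le n - 1$. View $\alt_{n-1}$ as the stabiliser of the point $n$ in $\alt_n$. Since $p \mid n$, the $p$-part of $|\alt_n|$ strictly exceeds the $p$-part of $|\alt_{n-1}|$. By the inductive hypothesis applied to $\alt_{n-1}$ (with the same primes $p, q$, noting $q \le n - 1$), there exist Sylow $p$- and Sylow $q$-subgroups $P_0, Q_0$ of $\alt_{n-1}$ with $\alt_{n-1} = \langle P_0, Q_0 \rangle$. Choose a Sylow $p$-subgroup $P$ of $\alt_n$ containing $P_0$; a standard Sylow argument lets me arrange $P \cap \alt_{n-1} = P_0$, so $P \not\le \alt_{n-1}$. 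Then $\langle P, Q_0 \rangle$ properly contains the maximal subgroup $\alt_{n-1}$, hence equals $\alt_n$.

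\emph{Case B: $p \nmid n$ and $q \nmid n$.} Fix a Sylow $p$-subgroup $P$; I would produce a $q$-element $x \in \alt_n$ so that $\langle P, x \rangle = \alt_n$, and then take $Q$ to be any Sylow $q$-subgroup of $\alt_n$ containing $x$. The orbits of $P$ have sizes that are powers of $p$ summing to $n$; since $p \nmid n$ there is at least one $P$-fixed point, while the number of $P$-orbits grows only like $O(\log n)$. I choose $x$ to be a $q$-cycle when $q$ exceeds the number of $P$-orbits, and otherwise an element of a Sylow $q$-subgroup whose largest cycle has length $q^a$ as big as possible. Conjugating $x$ in $\alt_n$, I arrange its long cycle to meet every $P$-orbit and its fixed set to avoid at least one $P$-fixed point; this yields transitivity of $\langle P, x \rangle$. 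For primitivity, any nontrivial block size $b$ would divide $n$, and each $P$-orbit would decompose under the putative block system into pieces of $p$-power size; combined with $p \nmid n$ and the existence of a $P$-orbit of size $p^{\lfloor \log_p n \rfloor}$, this rules out any $1 < b < n$. Finally, a primitive subgroup of $\Sym_n$ containing a cycle of prime or prime-power length whose support is at most $n - 3$ is $\alt_n$ or $\Sym_n$ by the classical theorems of Jordan and Marggraf; being contained in $\alt_n$, our group is $\alt_n$. The border cases $q \in \{n-2, n-1\}$ are handled separately using the classification of $2$-transitive groups of degree $n$.

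The main obstacle is the primitivity step in Case B: the divisibility $b \mid n$ alone leaves room for intermediate block sizes when $n$ has many small prime factors, so this must be combined carefully with the $P$-orbit structure and the cycle structure of $x$ to rule out every $1 < b < n$. A secondary obstacle lies in the small-$q$ regime (for example $q = 3$ with $n$ large), where a single $q$-cycle cannot hit every $P$-orbit and one must instead exploit a long $q^a$-cycle together with a suitable strengthening of Jordan's theorem.
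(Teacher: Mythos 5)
There is a concrete missing case in your reduction: $q = n$ with $n$ prime. Your Case~A covers ``$p \mid n$ or $q \mid n$'', but when the prime dividing $n$ is $n$ itself (that is, $q = n$, and possibly $p = q = n$), the swap leaves you with one of your two primes equal to $n$, and the inductive hypothesis cannot be applied to $\alt_{n-1}$: the statement requires both primes to be at most the degree, and indeed a Sylow $n$-subgroup of $\alt_{n-1}$ is trivial, so no analogue of $\alt_{n-1} = \langle P_0, Q_0\rangle$ exists. You note ``$q \le n-1$'' but silently also need $p \le n-1$, which fails exactly here. Case~B excludes this situation by hypothesis ($q \nmid n$), and your list of border cases only mentions $q \in \{n-2, n-1\}$. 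So the configuration in which the larger prime equals the degree --- including $p = q = n$ (two $n$-cycles) and the twin-prime case $p = n-2$, $q = n$ --- is not treated at all. The paper devotes a separate argument to it: for $q = n$ one takes $Q = \langle y \rangle$ with $y$ an $n$-cycle and invokes Manning's theorem \cite[13.9]{Wi} (a primitive group containing a cycle of prime length at most $n-3$ contains $\alt_n$), and in the excluded subcases $p \in \{n, n-2\}$, where that theorem does not apply, one chooses a $p$-cycle $x$ and the $n$-cycle $y$ so that $xy$ is a $3$-cycle.

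Separately, your Case~B is a program rather than a proof: the transitivity and primitivity steps, which you yourself flag as the main obstacles, are exactly where the work lies, and one ingredient is false as stated. For odd $p$ the number of orbits of a Sylow $p$-subgroup of $\alt_n$ is the base-$p$ digit sum of $n$, which can be of order $p$ (hence roughly $\sqrt{n}$ when $p$ is near $\sqrt{n}$), not $O(\log n)$; so the dichotomy ``take a $q$-cycle when $q$ exceeds the number of $P$-orbits'' does not suffice as stated. The paper instead works with an explicit $p$-element having $\alpha$ orbits of the maximal $p$-power size $a < n$ and $\beta$ fixed points, pitted against one or two $q$-power cycles of maximal length $b$, verifies the needed inequalities, and then rules out imprimitivity by showing a block would have $p$-power or $q$-power size, contradicting $\gcd(pq,n)=1$; your block argument would need to be completed along similar lines, and your appeal to the classification of $2$-transitive groups for $q \in \{n-2, n-1\}$ replaced by (or fleshed out into) an actual argument such as the paper's use of cycles whose product is a $3$-, $5$- or $7$-cycle.
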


\begin{proof}   We have already proved this with $p=2$.  So assume that $p$ is odd.   Set $G = \alt_n$.    
We induct on $n$.  If $n \le 11$, this is easily checked.  So assume $n > 11$.    

First suppose that  $p$ or $q$ properly divides $n$.   Then by induction,  $\alt_{n-1} = \langle P_0, Q_0 \rangle$
where $P_0$ is a Sylow $p$-subgroup and $Q_0$ is a Sylow $q$-subgroup of $\alt_{n-1}$.  Let $P \ge P_0$
be a Sylow $p$-subgroup of $G$ and $Q \ge Q_0$ a Sylow $q$-subgroup.   Then either $P \ne P_0$
or $Q \ne Q_0$ and so $\langle P, Q \rangle$ properly contains $\alt_{n-1}$ and the result holds.   

Next suppose that $q = n$.   Let $Q = \langle y \rangle$ be a Sylow $q$-subgroup.  It follows by a result of
Manning \cite[13.9]{Wi} that if $P$ is any Sylow $p$-group, then unless $p = q - e$ with $e=0$ or $2$ that $\langle P, Q \rangle = G$ (since
it is primitive and contains a $p$-cycle).   If $p=q - e$, we can choose $x$ a $p$-cycle. so that $xy$ is a $3$-cycle and so 
if $P=\langle x \rangle$, then $\langle P, Q \rangle$ is a primitive group containing a $3$-cycle, whence $G=\langle P, Q \rangle$.

If $n - 3 \le p \le q < n$, then we can choose a $p$-cycle $x$ and a $q$-cycle $y$ so that $\langle x, y \rangle$ is transitive and
$xy$ is a cycle of length $3$, $5$ or $7$.    Again the result of Manning, $G=\langle x, y \rangle$ for $n > 10$. 

Now suppose that $\gcd(pq, n)=1$ and $ p \le n -4$.  By Manning's result it suffices to show that we can choose Sylow
subgroups $P$ and $Q$ so that $\langle P, Q \rangle$ is primitive.  

We first show that we can choose $P$ and $Q$ so that $\langle P, Q \rangle$ is transitive.  Let $a$ be the largest power of
$p$ less than $n$ and $b$ the largest power of $q$ is that less than $n$.   Assume that $b \ge a$ (if not the interchange
the roles of $a$ and $b$).  Write $n  = \alpha a + \beta$ with $1 \le \alpha < p \le a$ and $0 \le \beta < a$.  Let $x$ be an element
with $\alpha$ orbits of size $a$ and $\beta$ fixed points.  If $b \ge \alpha + \beta$,  then let $y$ be a $b$-cycle that intersects
each orbit of $y$.   Then $\langle x, y \rangle$ is transitive.   Note that $\alpha + \beta < 2a \le 2b$.   Thus, unless $b > n/2$, we can choose
$y$ a product of two disjoint cycles of length $b$ so that each orbit of $y$ intersects one of the two cycles of $y$ and moreover
at least one orbit of $x$ intersects both cycles.  Then again $\langle x, y \rangle$ is transitive.  

The remaining case is when $\alpha + \beta > b > n/2$.  We claim this is not possible.    If $\alpha > n/4$, then $a \le 3$ and so $p=a=3$
and $n \le 8$.   Otherwise, $a > \beta > n/4$ and so $\alpha \le 3$.    Note that $n > (\alpha + 1) \beta$ and $\alpha \le 2$.   
If $\alpha = 2$, then $\beta < n/3$.   Thus $n/2 < \alpha + \beta < 2 + n/3$, whence $n < 12$.
If $\alpha =1 $, then $b \ge a > n/2$ and so $b \ge \beta +1$ and the result follows. 

Let $P$ be a Sylow $p$-subgroup containing $x$ and $Q$ a Sylow $q$-subgroup containing $y$.
Then $H: = \langle P, Q \rangle$ is transitive. We claim that $H$ is primitive and then again by Manning 
$G=H$.   Suppose that $H$ is imprimitive.   Note that $P$ and $Q$ are each generated by elements which
are cycles.  Thus some $1 \ne g \in P \cup Q$ which is a cycle must move some block of imprimitivity (since $H$
is transitive).   This implies that points moved by $g$ must be a union of blocks.   Thus the size of a block must divide
a power of $p$ or a  power $q$.    This implies
that   $\gcd(pq, n) \ne 1$, a contradiction. 
\end{proof}

\section{Sporadic Groups}  

\begin{thm} \label{t:sporsyl}   Let $G$ be a sporadic simple group and let $p \le q$ be primes 
each dividing $|G|$.   Then $G$ can be generated by a Sylow $p$-subgroup and a Sylow $q$-subgroup.
\end{thm}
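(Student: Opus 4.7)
The plan is to adapt the counting argument of Proposition \ref{p:spor}, replacing the base prime $2$ by an arbitrary prime $p$ dividing $|G|$. The first step is a simple reduction: rather than searching directly for a pair $(P, Q)$ of Sylow subgroups generating $G$, it is enough to find a $q$-element $x$ and a Sylow $p$-subgroup $P$ with $\langle P, x \rangle = G$, because then any Sylow $q$-subgroup $Q$ containing $x$ satisfies $\langle P, Q \rangle \supseteq \langle P, x \rangle = G$. So I would fix $P$ once and for all and look for a conjugate of a well-chosen $q$-element $x_0$ whose join with $P$ is all of $G$.

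Next, I would carry over the estimate of Proposition \ref{p:spor} verbatim, with $p$ in place of $2$. Fixing a Sylow $p$-subgroup $P$ of $G$, the number of conjugates of a given maximal subgroup $M \ge P$ that contain $P$ is at most $[\N_G(P):P]$, and each such conjugate meets a class $x^G$ in $|x^G \cap M|$ elements. Hence if $x$ is an element of $q$-power order with
\[
  [\N_G(P):P] \sum_{M} 1_M^G(x)/1_M^G(1) < 1,
\]
where the sum runs over representatives $M$ of the classes of maximal subgroups of $G$ of index coprime to $p$, then some conjugate $x^g$ satisfies $\langle P, x^g \rangle = G$. Taking $Q$ to be any Sylow $q$-subgroup of $G$ containing $x^g$ then completes the argument via the reduction above.

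The remaining task is a finite case analysis. For every sporadic simple group $G$ other than the Monster, the \textsc{GAP} character table library already records the list of maximal subgroups together with their permutation characters, so for each odd prime $p$ dividing $|G|$ I would enumerate the classes of maximal subgroups of index coprime to $p$ and, for each prime $q$ with $p \le q$ and $q \mid |G|$, test the displayed inequality on a convenient class of elements of order $q$. The case $p = 2$ is already subsumed by Proposition \ref{p:spor}, which gives the stronger conclusion that any nonidentity element together with a Sylow $2$-subgroup of $G$ generates $G$.

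The main obstacle is the Monster $\mathbb{M}$ for odd primes $p$. For $p = 2$ only five classes of maximal subgroups of odd index appear, and their permutation characters are tabulated in \cite{PermChars}. For $p = 3$ and $p = 5$ considerably more classes of maximal subgroups meet the constraint, and to run the same argument one needs both the complete list of maximal subgroups of $\mathbb{M}$ of index coprime to $p$ and their permutation character values on the relevant $q$-singular classes. Granting this data, the verification again reduces to a small collection of arithmetic inequalities, which I expect to hold with significant slack, just as in the $p = 2$ case of Proposition \ref{p:spor}.
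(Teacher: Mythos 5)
Your overall strategy is the right one and matches the paper's: reduce to finding a $q$-element $x$ with $\langle P, x^g\rangle = G$ and bound the number of ``bad'' conjugates of $x$ by $[\N_G(P):P]\sum_M 1_M^G(x)/1_M^G(1)$ over maximal subgroups of index coprime to $p$. But your expectation that this inequality ``holds with significant slack'' for all odd $p$, as it does for $p=2$, is where the argument breaks. For odd $p$ the crude bound $[\N_G(P):P]$ on the number of conjugates of $M$ containing $P$ is not always good enough; one has to replace it by the exact value $|\N_G(P)|/|\N_M(P)|$, computing $|\N_M(P)|$ from the character table of $M$ when $P$ is cyclic or from a known overgroup of $M$ in which $P$ is normal. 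Even after these refinements, the counting bound fails outright in a few cases (this happens for $p=q=3$ with $G=M_{23}$ or $HS$), and there one must exhibit an explicit element $y$ in the relevant class and verify directly that $\langle P, y\rangle = G$; no choice of ``convenient class'' rescues the character-theoretic estimate there.

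The second, more serious gap is the Monster. Your plan says ``granting this data,'' but the data is exactly the issue: at the time of writing the full list of maximal subgroups of the Monster was not known, and many of the relevant permutation characters $1_M^G$ are unavailable. The way around this is to use the fact that every maximal subgroup outside the $44$ known classes is almost simple with socle $L_2(13)$, $U_3(4)$, $U_3(8)$ or $Sz(8)$; this guarantees that all maximal subgroups containing a Sylow $p$-subgroup are known except when $p=19$, where a hypothetical subgroup with socle $U_3(8)$ could intervene. For the known-but-uncharted maximal subgroups one works with all possible permutation characters computed from the character table of $M$ and takes common upper bounds. Finally, the case $p=q=19$ cannot be handled by the counting argument at all: instead one uses structure constants to find conjugate elements $x, y$ of order $19$ with $xy$ of order $71$, and concludes $\langle x, y\rangle = G$ because no maximal subgroup (known or potential) has order divisible by $19\cdot 71$. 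Your proposal is missing both this reduction of the unknown-subgroup problem to $p=19$ and the separate argument that settles that case.
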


\begin{proof}
Let us first assume that $G$ is not the Monster.

As a first step, we generalize the approach from the proof of
Proposition~\ref{p:spor},
in order to check for which prime divisors $p$ of $|G|$
and for which nontrivial conjugacy classes $x^G$ of $G$
the group $G$ is generated by a Sylow $p$-subgroup together with a
conjugate of $x$.
The upper bound $[\N_G(P):P]$ for $|\N_G(P)|/|\N_M(P)|$,
for a maximal subgroup $M$ of $G$ that contains $P$,
is not good enough in some of the cases considered here;
instead of it, we can compute $|\N_M(P)|$ from the character table of $M$
whenever $P$ is cyclic, or use information about a larger known subgroup
of $M$ in which $P$ is normal.
We get that except in a few cases, all nontrivial classes $x^G$ have the
desired property.
In the remaining cases, if $x$ is an $r$-element,
either one sees that some other $r$-element $y$ satisfies
$G = \langle P, y \rangle$,
or one can find a random element $y \in x^G$ that works.
(The latter happens exactly for $p = r = 3$
and when $G$ is one of the groups $M_{23}$ or $HS$.)

Now consider the case that $G$ is the Monster,
which is special because the complete list of classes of maximal subgroups
of $G$ is currently not known.
From \cite{NW13} and \cite{Mmaxes} we know $44$ classes of maximal subgroups,
and that each possible additional maximal subgroup is almost simple
and has socle $L_2(13)$, $U_3(4)$, $U_3(8)$, or $Sz(8)$.
This implies that we know all those maximal subgroups that contain
a Sylow-$p$-subgroup of $G$ except in the case $p = 19$,
where maximal subgroups with socle $U_3(8)$ may arise.

Thus let us first consider that at least one of $p$, $r$
is different from $19$.
In this situation,
we use the same approach as for the other sporadic simple groups.
The only complication is that not all permutation characters $1_M^G$,
for the relevant maximal subgroups $M$ of $G$, are known;
however, if this happens then the character table of $M$ is known,
and we can compute the possible permutation characters,
and take the common upper bounds for these characters.
In each case, we get that the claimed property holds.

Finally, let $p = r = 19$.
The group $G$ has exactly one class of elements of order $19$.
Let $x$ be such an element.
From the character table of $G$, we can compute that there exist
conjugates $y$ of $x$ such that $x y$ has order $71$.
Since $\langle x, y \rangle = \langle x, x y \rangle$ holds
and no maximal subgroup of $G$ has order divisible by $19 \cdot 71$,
we have $\langle x, y \rangle = G$.
\end{proof}  


\end{document}